\newtheorem{theorem}{Theorem}
\newtheorem{corollary}{Corollary}
\newtheorem{lemma}{Lemma}
\newtheorem{definition}{Definition}
\newcommand{\er}[0]{\mathbf e_r}
\newcommand{\erp}[0]{\mathbf e_r'}
\newcommand{\ez}[0]{\mathbf e_z}
\newcommand{\ex}[0]{\mathbf e_x}
\newcommand{\ey}[0]{\mathbf e_y}
\title[Quadratic curvature blowup]{A minimal lamination of the interior of a positive cone with quadratic curvature blowup}
\author{Christine Breiner}
\address{Department of Mathematics, Fordham University, Bronx, NY 10458}
\email{cbreiner@fordham.edu}
\author{Stephen J. Kleene}
\address{Department of Mathematics, MIT, Cambridge, MA 02139}
\email{skleene@math.mit.edu}
\thanks{C. Breiner was supported in part by NSF grant DMS-1308420 and an AMS-Simons Travel Grant. S.J. Kleene was partially supported by NSF grant DMS-1004646.}
\begin{document}
\maketitle
\section{Introduction}
In this note we use elementary methods to construct a minimal lamination of the interior of a positive cone in $\mathbb R^3$. More precisely, we consider the immersion 
\begin{align} \label{delta_map} 
G(s, \theta) =    \left(e^{ \delta \theta} \sinh(s) \sin(\theta),  e^{ \delta \theta} \sinh(s) \cos (\theta),  \frac{1 }{ \delta} e^{ \delta \theta}   \right)
\end{align}
and show that a small graph over its image is a minimal surface for $s $ roughly proportional to $\delta^{-1 /4}$. The immersion $G(s, \theta)$ given above is a complete embedded $\infty$-valued disk on one side of plane, spiraling from above with quadratic curvature blowup, and the minimal graph over $G$ we find has boundary outside of a positive cone. If $h$ denotes the height above the plane and $|A|^2$ the second fundamental form of the surface, then
\begin{align} \label{lin_blow_up}
\sup_{G \cap \{ h > h_0 \}}| A |^2   \approx \delta^{-2}h_0^{-2}. 
\end{align}
We note that as $\delta$ tends to $0$, a suitable renormalization of $G$ converges to a  conformal parametrization of the helicoid. 

One cannot expect to find an embedded minimal graph over all of $G$ as such an example would contradict the result of Colding-Minicozzi concerning the properness of embedded minimal disks \cite{MMGPD,CY}. Indeed, using a blow up argument, one observes that an embedded minimal disk contained in a half space with quadratic curvature blow up as in (\ref{lin_blow_up}) cannot be extended beyond a positive cone with vertex at the blow up point. 

Colding and Minicozzi, using the Weierstrass Representation, provide an example of a minimal disk, with boundary on $\partial B_1$, spiraling into a plane with quartic curvature blow up \cite{CMPVNP}. Their example gives a lamination of $B_1\backslash \{0\}$ that does not extend as a lamination to all of $B_1$. In \cite{MPR}, Meeks, Perez, and Ros demonstrate that such a lamination can only occur with a faster than quadratic rate of blow up. It remains to show whether or not such laminations exist for rates between quadratic and quartic. For other, more pathological examples of laminations of open regions of $B_1$ by embedded minimal disks see \cite{BDeanPaper,HW3,SidPaper,Kl}.

Aside from studying lamination theory, one of the primary motivation for recording this result is the fact that it uses entirely elementary methods which seem to be widely applicable to related problems.

\section{The geometry of the initial immersion} 
In this section, we record relevant  geometric information about the immersion $G$. These include the unit normal, metric, second fundamental form and mean curvature of $G$. It will convenient to use the orthonormal basis $\{\er,\er',\ez\}$ in $\mathbb R^3$ to describe certain quantities on $G$ where $\er = (\sin \theta, \cos \theta, 0)$, $\er' = \partial_\theta \er$, and $\ez = (0,0,1)$. We note the components of the gradient and hessian of $G$ below:
\begin{align} \label{G_derivatives}
& G_s = e^{\delta \theta} \cosh(s) \er , \quad G_\theta = \delta e^{\delta \theta} \sinh(s)\er + e ^{\delta \theta} \sinh(s) \erp  +  e^{\delta \theta} \ez \\ \notag
& G_{s s} =   e^{\delta  \theta} \sinh(s) \er , \quad G_{s  \theta} = \delta e^{ \delta \theta} \cosh(s)\er + e^{\delta \theta} \cosh(s) \erp, \\ \notag
&  G_{\theta  \theta} =  \delta^2 e^{\delta \theta} \sinh(s) \er  - e^{\delta \theta} \sinh(s) \er + 2 \delta e^{\delta \theta} \sinh(s) \erp + \delta e^{\delta \theta} \ez.
\end{align}

\subsection{The unit normal}

\begin{lemma} \label{unit_normal}
The unit normal of $G$ is 
\begin{align}\label{normal}
\nu (s, \theta) = - \cosh^{-1}(s)\erp  + \tanh(s) \ez
\end{align}
\end{lemma}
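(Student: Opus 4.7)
Since the formula for $\nu$ is already given explicitly in terms of the orthonormal frame $\{\er, \erp, \ez\}$, my plan is to prove the lemma by direct verification rather than by computing $G_s \times G_\theta$ and normalizing. The three things to check are: (i) $\nu$ is a unit vector; (ii) $\nu \cdot G_s = 0$; and (iii) $\nu \cdot G_\theta = 0$. The pointwise sign of $\nu$ is a convention, so all that is really being asserted is that the right-hand side of \eqref{normal} is one of the two unit normals to $G$.

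For step (i), since $\{\er, \erp, \ez\}$ is orthonormal, $|\nu|^2 = \cosh^{-2}(s) + \tanh^2(s)$, and the identity $1 + \sinh^2(s) = \cosh^2(s)$ reduces this to $1$. For step (ii), inspection of \eqref{G_derivatives} shows that $G_s$ is a scalar multiple of $\er$, whereas $\nu$ lies in $\mathrm{span}(\erp, \ez)$, so $\nu \cdot G_s = 0$ is immediate from orthonormality of the frame.

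Step (iii) is the only place any cancellation occurs, and it is the step I would write out. Expanding $G_\theta$ from \eqref{G_derivatives} and using orthonormality, only the $\erp$-component and $\ez$-component of $G_\theta$ contribute, giving
\begin{equation*}
\nu \cdot G_\theta = (-\cosh^{-1}(s))\bigl(e^{\delta \theta}\sinh(s)\bigr) + \tanh(s)\bigl(e^{\delta \theta}\bigr) = e^{\delta\theta}\bigl(-\tanh(s) + \tanh(s)\bigr) = 0.
\end{equation*}
This completes the verification.

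There is no real obstacle here: the computation is routine, and the only mild subtlety is the orientation sign. If one preferred a first-principles derivation, the alternative is to compute $G_s \times G_\theta$ using the (left-handed) multiplication table $\er \times \erp = -\ez$, $\er \times \ez = \erp$, which yields $G_s \times G_\theta = e^{2\delta\theta}\cosh(s)\bigl(\erp - \sinh(s)\,\ez\bigr)$ with norm $e^{2\delta\theta}\cosh^2(s)$; dividing then recovers $\pm\nu$ from \eqref{normal}. I would still prefer the verification route since it is shorter and avoids having to track the handedness of the frame $\{\er,\erp,\ez\}$.
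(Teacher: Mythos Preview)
Your proof is correct. The paper takes the constructive route you mention as an alternative: it computes $G_s \wedge G_\theta$ directly (using the conventions $\er \wedge \erp = \ez$, $\er \wedge \ez = -\erp$), obtains $e^{2\delta\theta}\cosh(s)\sinh(s)\,\ez - e^{2\delta\theta}\cosh(s)\,\erp$, computes $|G_s \wedge G_\theta|^2 = e^{4\delta\theta}\cosh^4(s)$, and normalizes. Your verification approach is marginally cleaner precisely for the reason you state---it sidesteps the handedness of the frame $\{\er,\erp,\ez\}$ (which is in fact left-handed here, since $\er \times \erp = -\ez$), so you never have to decide whether the paper's $\wedge$ is the usual cross product or its negative; you simply confirm that the stated $\nu$ is \emph{a} unit normal, which is all the lemma asserts. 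The paper's approach, on the other hand, has the minor advantage of producing $|G_s \wedge G_\theta|$ along the way, which is exactly $\sqrt{|g|}$ and is reused in \eqref{determinant}.
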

\begin{proof}
This follows immediately as
\begin{align*}
G_s \wedge G_\theta & =  (e^{\delta \theta} \cosh(s) \er) \wedge( \delta e^{\delta \theta} \sinh(s)\er + e ^{\delta \theta} \sinh(s) \erp  +  e^{\delta \theta} \ez ) \\ \notag
& = e^{2\delta \theta}\cosh(s) \sinh(s) \er \wedge \erp +e^{2\delta \theta} \cosh(s) \er \wedge \ez\\ \notag
&  =e^{2\delta \theta} \cosh(s) \sinh(s)\ez -e^{2 \delta \theta} \cosh(s) \erp
\end{align*}
and thus
\begin{align*}
|G_s \wedge G_\theta|^2 = e^{4 \delta \theta} \cosh^4(s).
\end{align*}
\end{proof}

\subsection{The metric}

\begin{lemma} \label{metric}
Let $g = g _{s  s} ds^2  + g_{\theta  \theta} d\theta^2 + 2 g_{s  \theta} ds \, d \theta$ be the metric of $G$. Then
\begin{align*}
&g_{s s} = e^{2 \delta \theta} \cosh^2(s), \quad g_{\theta  \theta} = e^{2 \delta\theta} \left( \cosh^2(s) + \delta^2 \sinh^2(s)\right), \quad  g_{s \theta} = \delta e^{2 \delta \theta} \sinh(s) \cosh(s).
\end{align*}
\end{lemma}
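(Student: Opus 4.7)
The plan is to compute the three components of the induced metric by taking inner products of the partial derivatives $G_s$ and $G_\theta$ listed in (\ref{G_derivatives}), using the orthonormality of the moving frame $\{\er, \erp, \ez\}$ at each point away from the $z$-axis.

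First, since $G_s$ is a scalar multiple of the unit vector $\er$, the diagonal term $g_{ss} = \langle G_s, G_s\rangle$ is simply the square of the scalar $e^{\delta\theta}\cosh(s)$, giving the first formula immediately.

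Next, for the off-diagonal term $g_{s\theta} = \langle G_s, G_\theta\rangle$, orthonormality of the frame ensures that only the $\er$-component of $G_\theta$ contributes. This collapses the inner product to the product of the two $\er$-coefficients, yielding $\delta e^{2\delta\theta}\sinh(s)\cosh(s)$ with no further work.

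The only step requiring any identity at all is $g_{\theta\theta} = |G_\theta|^2$: the three summands comprising $G_\theta$ lie along mutually orthogonal directions, so I sum their squared coefficients to obtain $e^{2\delta\theta}\bigl(\delta^2\sinh^2(s) + \sinh^2(s) + 1\bigr)$, and then apply $\sinh^2(s) + 1 = \cosh^2(s)$ to arrive at the stated form. There is no real obstacle here; the lemma is a pure bookkeeping exercise, with the orthonormality of $\{\er,\erp,\ez\}$ and one hyperbolic Pythagorean identity as the only inputs.
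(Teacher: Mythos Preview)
Your argument is correct and is exactly the approach the paper takes: the paper's proof is the single line ``This follows directly from (\ref{G_derivatives}),'' and you have simply written out that direct computation using the orthonormality of $\{\er,\erp,\ez\}$ and the identity $\sinh^2(s)+1=\cosh^2(s)$.
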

\begin{proof}
This follows directly from (\ref{G_derivatives}).
\end{proof}

As a direct consequence, 
\begin{align} \label{determinant}
|g| : = \det g =  e^{4 \delta \theta} \cosh^4(s)
\end{align}
and the components of the dual metric are
\begin{align} \label{dual_metric}
g^{s  s} = e^{ - 2 \delta \theta}\cosh^{-2 }(s) (1 + \delta^2 \tanh^2(s)), \quad g^{\theta \theta} = e^{- 2 \delta \theta} \cosh^{-2}(s), \quad g^{s  \theta} = - \delta e^{ - 2 \delta \theta} \tanh(s) \cosh^{-2} (s).
\end{align}

\subsection{The second fundamental form}

\begin{lemma} \label{2FF}
Let $A :  = A_{s  s} ds^2 + A_{\theta  \theta} d \theta^2 + 2 A_{s  \theta} ds\,  d \theta$ be the second fundamental form, and let $|A|^2$ be its length. Then we have
\begin{align*}
A_{s  s} = 0, \quad A_{\theta \theta} = - \delta e^{\delta \theta} \tanh(s), \quad A_{s \theta} = - e^{\delta \theta}
\end{align*}
and 
\begin{align*}
|A|^2 = e^{- 2 \delta \theta} \cosh^{ - 4} (s)(2 + 2\delta^2 \tanh^2(s)).
\end{align*}
\end{lemma}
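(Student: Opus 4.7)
The plan is to extract $A_{ss}, A_{s\theta}, A_{\theta\theta}$ one at a time via $A_{ij} = \langle G_{ij}, \nu \rangle$, using the Hessian components recorded in \eqref{G_derivatives} together with the unit normal from Lemma \ref{unit_normal}. The crucial simplification is that $\nu$ lies in the plane spanned by $\erp$ and $\ez$, so by orthonormality of $\{\er, \erp, \ez\}$ the inner product picks out only the $\erp$- and $\ez$-components of $G_{ij}$, weighted by $-\cosh^{-1}(s)$ and $\tanh(s)$ respectively. Then $A_{ss}=0$ is immediate because $G_{ss}$ is a pure multiple of $\er$; $A_{s\theta}$ reduces to the $\erp$-coefficient of $G_{s\theta}$ times $-\cosh^{-1}(s)$, giving $-e^{\delta\theta}$; and $A_{\theta\theta}$ combines the $\erp$- and $\ez$-parts of $G_{\theta\theta}$, with a cancellation between $-2\delta e^{\delta\theta}\tanh(s)$ (from the $\erp$-part) and $+\delta e^{\delta\theta}\tanh(s)$ (from the $\ez$-part) producing the stated $-\delta e^{\delta\theta}\tanh(s)$.

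For the length $|A|^2$, rather than expanding the double contraction $g^{ij}g^{kl}A_{ik}A_{jl}$ against the non-diagonal dual metric of \eqref{dual_metric}, I would use the algebraic identity $|A|^2 = 4H^2 - 2K$ (which amounts to $\lambda_1^2+\lambda_2^2 = (\lambda_1+\lambda_2)^2 - 2\lambda_1\lambda_2$ for the principal curvatures). The Gauss curvature is immediate from \eqref{determinant}: since $A_{ss}=0$, one has $K = \det(A)/\det(g) = -A_{s\theta}^2/|g| = -e^{-2\delta\theta}\cosh^{-4}(s)$, which supplies the leading $2\,e^{-2\delta\theta}\cosh^{-4}(s)$ term via $-2K$. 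For the mean curvature, expand $2H = g^{ss}A_{ss} + 2g^{s\theta}A_{s\theta} + g^{\theta\theta}A_{\theta\theta}$; the first term vanishes and the remaining two partially cancel (both carry a factor $\delta\tanh(s)\cosh^{-2}(s)e^{-\delta\theta}$ with opposite signs), leaving a single net contribution proportional to $\delta\tanh(s)$ that, after squaring, produces the $\delta^2\tanh^2(s)$ correction.

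The main obstacle is purely arithmetic bookkeeping: tracking the various factors of $e^{\pm\delta\theta}$, $\cosh(s)$, and $\tanh(s)$, and in particular the signs when cross-multiplying the mixed entry $g^{s\theta}$ against $A_{s\theta}$ and $A_{\theta\theta}$. The identity $|A|^2 = 4H^2 - 2K$ is what makes the calculation clean, since it reduces the problem to computing two scalar quantities rather than contracting a $(0,2)$-tensor with itself in a non-diagonal metric. Once $(2H)^2$ and $-2K$ are assembled one reads off the claimed expression of the form $e^{-2\delta\theta}\cosh^{-4}(s)\bigl(2 + c\,\delta^2\tanh^2(s)\bigr)$.
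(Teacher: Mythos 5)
Your computation of the components $A_{ij}=\langle G_{ij},\nu\rangle$ is exactly the paper's route: the paper's proof cites the derivatives \eqref{G_derivatives} and the normal \eqref{normal}, and the simplification you describe (only the $\erp$ and $\ez$ parts of $G_{ij}$ survive, with the $\erp/\ez$ cancellation in $A_{\theta\theta}$) is what makes those three entries immediate. That part is fine and matches.

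For $|A|^2$ you take a genuinely different route. The paper computes the full contraction $g^{ik}g^{jl}A_{ij}A_{kl}$ by explicitly writing the $3\times 3$ arrays of nonvanishing index pairs and summing their Hadamard product; you instead use the scalar identity $|A|^2 = (\operatorname{tr}(g^{-1}A))^2 - 2\det(g^{-1}A)$, reducing the problem to $K = \det A/\det g = -e^{-2\delta\theta}\cosh^{-4}(s)$ and the trace of the shape operator. This is cleaner — two scalar quantities instead of a nine-term contraction against a non-diagonal dual metric — and it is a valid argument, provided you are careful about the convention implicit in ``$4H^2$'': the identity as you wrote it requires $H$ to be the \emph{mean} of the principal curvatures, whereas the $H$ in Corollary~\ref{mean_curvature} equals the full trace $g^{ij}A_{ij}$ (up to sign), so plugging that $H$ into $4H^2-2K$ would be off by a factor of $4$ in the $\delta^2$ term. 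Also note a mild circularity in ordering: Corollary~\ref{mean_curvature} follows this lemma, so if you invoke $H$ you should compute the trace directly from Lemmas~\ref{metric}--\ref{2FF} rather than cite the corollary.

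There is, however, a real gap: you stop at ``$2 + c\,\delta^2\tanh^2(s)$'' without pinning $c$ down, and this is precisely where the content of the claim lies. Carrying your own calculation to the end, $\operatorname{tr}(g^{-1}A) = g^{ss}A_{ss}+2g^{s\theta}A_{s\theta}+g^{\theta\theta}A_{\theta\theta} = \delta e^{-\delta\theta}\tanh(s)\cosh^{-2}(s)$ (the $2g^{s\theta}A_{s\theta}$ and $g^{\theta\theta}A_{\theta\theta}$ terms partially cancel, just as you predicted), and hence $(\operatorname{tr}(g^{-1}A))^2 = \delta^2 e^{-2\delta\theta}\tanh^2(s)\cosh^{-4}(s)$; together with $-2K = 2e^{-2\delta\theta}\cosh^{-4}(s)$ this yields $|A|^2 = e^{-2\delta\theta}\cosh^{-4}(s)\bigl(2+\delta^2\tanh^2(s)\bigr)$, i.e.\ $c=1$, \emph{not} $c=2$ as in the lemma statement. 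In fact summing the nine entries of the Hadamard product displayed in the paper's own proof gives $2 + (1+1-1+1+1-1-1-1+1)\,\delta^2\tanh^2(s) = 2+\delta^2\tanh^2(s)$ as well, so the lemma's coefficient appears to be a slip. You should not leave $c$ floating — completing the arithmetic is exactly the step that would have surfaced the discrepancy, and it affects nothing downstream (only the $O(\delta^2)$ correction to $|A|^2$, which plays no quantitative role in Theorem~\ref{main_theorem}).
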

\begin{proof}
We determine the components of the second fundamental form by using (\ref{G_derivatives}) and \eqref{normal}. To obtain the expression for the length of the second fundamental form, we write
\begin{align*}
|A|^2 &= 
\left(\begin{array}{ccc}
A_{s  \theta} A_{s  \theta} & A_{s  \theta} A_{\theta  s} & A_{s  \theta} A_{\theta  \theta} \\
& \\
A_{\theta  s} A_{s  \theta } &  A_{ \theta  s} A_{\theta  s} &A_{\theta s} A_{\theta \theta} \\
& \\
A_{\theta  \theta} A_{s  \theta} & A_{\theta \theta} A_{\theta  s} & A_{\theta  \theta} A_{\theta  \theta}
\end{array} \right) * 
 \left(\begin{array}{ccc}
g^{s  s} g^{\theta  \theta} & g^{s  \theta} g^{\theta  s} & g^{s  \theta} g^{\theta  \theta}\\
& \\
g^{\theta  s} g^{s  \theta}&  g^{\theta  \theta} g^{s  s} & g^{\theta  \theta} g^{s  \theta}\\
& \\
g^{\theta  s} g^{\theta  \theta} & g^{\theta  \theta} g^{\theta  s} & g^{\theta \theta} g^{\theta  \theta}
\end{array} \right) \\ \notag
\\
& = 
e^{2\delta \theta}
\left(\begin{array}{ccc}
1 & 1 &  \delta  \tanh(s)\\ \notag
 \\
1 & 1 &  \delta \tanh(s)\\ \notag
\\
 \delta \tanh(s) &  \delta \tanh(s) & \delta^2 \tanh^2(s) 
\end{array} \right)
*
\frac{e^{- 4\delta \theta}}{\cosh^{ 4} (s) }
\left(\begin{array}{ccc}
1 + \delta^2 \tanh^2(s) & \delta^2 \tanh^2 (s) & - \delta \tanh(s) \\
\\
\delta^2 \tanh^2 (s) & 1 + \delta^2 \tanh^2(s) &  - \delta \tanh(s) \\
\\
-\delta \tanh(s) & - \delta \tanh(s) & 1
\end{array} \right) \\ \notag
\\
& = \cosh^{-4 }( s) e^{- 2\delta \theta} \left( 2 + 2\delta^2 \tanh^2(s)\right).
\end{align*}
\end{proof}

\begin{corollary} \label{mean_curvature}
Let $H$ be the mean curvature of $G$. Then 
\begin{align*}
H (s, \theta) = - \delta e^{- \delta \theta} \tanh(s) \cosh^{-2} (s).
\end{align*}
\end{corollary}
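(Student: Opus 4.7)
The proof is a direct trace computation: since $H$ equals $g^{ij}A_{ij}$ (the trace of the shape operator with respect to the unit normal $\nu$ from Lemma~\ref{unit_normal}), all the required inputs have already been assembled, namely the entries of the dual metric in \eqref{dual_metric} and the components of $A$ in Lemma~\ref{2FF}. I would not invoke any outside theory.

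The first observation to exploit is the structural simplification provided by $A_{ss}=0$: this kills the entire $g^{ss}A_{ss}$ term, which is the only place where the complicated factor $1+\delta^2\tanh^2(s)$ coming from $g^{ss}$ would enter. Consequently the whole computation collapses to
\[
H \;=\; g^{\theta\theta}A_{\theta\theta} + 2\,g^{s\theta}A_{s\theta},
\]
and it is already visible from the formulas that both surviving terms carry a common factor of $\delta\,e^{-\delta\theta}\tanh(s)\cosh^{-2}(s)$. Substituting the explicit values produces
\[
g^{\theta\theta}A_{\theta\theta} \;=\; -\delta\,e^{-\delta\theta}\tanh(s)\cosh^{-2}(s), \qquad
2\,g^{s\theta}A_{s\theta} \;=\; 2\delta\,e^{-\delta\theta}\tanh(s)\cosh^{-2}(s),
\]
and the two contributions collapse to a single multiple of the common factor, giving the stated expression modulo the sign convention used for $A$.

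There is no real obstacle here: the computation is entirely routine once one notices that the vanishing of $A_{ss}$ trivializes the only term where the off-diagonal contributions from the dual metric could clash. The only point that requires any care is being consistent with the orientation of $\nu$ chosen in Lemma~\ref{unit_normal}, which fixes the overall sign appearing in the corollary. Everything else is bookkeeping on top of Lemma~\ref{metric} and Lemma~\ref{2FF}.
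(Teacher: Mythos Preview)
Your approach is exactly the paper's (implicit) proof: the corollary is stated without argument precisely because the trace $H = g^{ij}A_{ij}$ follows immediately from the dual metric \eqref{dual_metric} and Lemma~\ref{2FF}, with $A_{ss}=0$ killing the only complicated term. The sign discrepancy you flag is real---with the paper's own conventions the trace comes out to $+\delta\, e^{-\delta\theta}\tanh(s)\cosh^{-2}(s)$, and indeed that positive sign is what the paper actually uses later when it records $Q_\delta(0)=\delta\tanh(s)$---so your hedge ``modulo the sign convention'' is entirely appropriate.
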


\subsection{The Laplace operator}
\begin{lemma} \label{laplace_operator}
Let $\Delta_g$ be the Laplace operator on $G$. Then 
\begin{align*}
e^{ 2 \delta \theta} \cosh^{2} (s)\Delta_{g} & = (1 + \delta^2\tanh^2(s)) \partial_{s  s} + \partial_{\theta  \theta} - 2 \delta \tanh(s) \partial_{s  \theta}  \\ \notag
&\quad +  2 \delta^2 \tanh(s)\cosh^{ - 2}(s) \partial_s  - \delta \cosh^{-2} (s) \partial_\theta.
\end{align*}
\end{lemma}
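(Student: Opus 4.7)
The plan is to apply the standard intrinsic formula for the Laplace--Beltrami operator on a Riemannian $2$-manifold in local coordinates,
\begin{align*}
\Delta_g f \;=\; \frac{1}{\sqrt{|g|}} \sum_{i,j} \partial_i\!\left( \sqrt{|g|}\, g^{ij}\, \partial_j f \right),
\end{align*}
and then substitute the components $g^{ij}$ from (\ref{dual_metric}) together with $\sqrt{|g|} = e^{2\delta\theta}\cosh^2(s)$ from (\ref{determinant}).

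The key observation that makes the bookkeeping clean is that the weighted inverse metric coefficients simplify significantly: one computes
\begin{align*}
\sqrt{|g|}\, g^{ss} = 1 + \delta^{2}\tanh^{2}(s), \qquad \sqrt{|g|}\, g^{\theta\theta} = 1, \qquad \sqrt{|g|}\, g^{s\theta} = -\delta\tanh(s),
\end{align*}
all of which are independent of $\theta$. Consequently $\partial_\theta$ kills the $\sqrt{|g|}\,g^{\theta s}$ and $\sqrt{|g|}\,g^{\theta\theta}$ factors, and only the $s$-derivatives of the first and third coefficients will contribute first-order terms.

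Next I would expand the formula term by term. The second-order terms assemble to $(1+\delta^2\tanh^2(s))\partial_{ss} + \partial_{\theta\theta} - 2\delta\tanh(s)\partial_{s\theta}$, matching the stated expression. The first-order terms come from $\partial_s(1+\delta^2\tanh^2(s)) = 2\delta^2\tanh(s)\cosh^{-2}(s)$, which multiplies $\partial_s$, and from $\partial_s(-\delta\tanh(s)) = -\delta\cosh^{-2}(s)$, which multiplies $\partial_\theta$. The remaining two cross terms $\partial_\theta(\sqrt{|g|}g^{\theta s})$ and $\partial_\theta(\sqrt{|g|}g^{\theta\theta})$ vanish by the $\theta$-independence noted above.

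Finally, multiplying through by $\sqrt{|g|} = e^{2\delta\theta}\cosh^{2}(s)$ transforms the identity $\sqrt{|g|}\,\Delta_g f = \sum_{i,j}\partial_i(\sqrt{|g|}\,g^{ij}\,\partial_j f)$ into the desired formula. There is no real obstacle here beyond careful accounting; the computation is entirely mechanical once one notes the fortunate cancellations in $\sqrt{|g|}\,g^{ij}$ that suppress the $\theta$-dependence and reduce the first-order terms to two summands.
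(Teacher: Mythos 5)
Your proposal is correct and follows exactly the same route as the paper: the paper's one-line proof invokes precisely this standard divergence-form expression for $\Delta_g$ together with the values of $g^{ij}$ from \eqref{dual_metric} and $\sqrt{|g|}$ from \eqref{determinant}. You have simply written out the mechanical expansion, including the helpful observation that the weighted coefficients $\sqrt{|g|}\,g^{ij}$ are $\theta$-independent, which the paper leaves implicit.
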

\begin{proof}
This follows directly from the expressions for the coefficients of the dual metric and its determinant in (\ref{determinant}) and (\ref{dual_metric}).
\end{proof}
\section{Correcting the mean curvature}
We seek a graph over $G$ by a function $w(s, \theta)$ of the form 
\begin{align} \label{w_ansatz}
w(s, \theta) =  e^{\delta \theta} u (s),
\end{align}
so that  $H_w  \equiv 0$. Here we have denoted by $H_w$ the mean curvature of $G_w$, the normal graph over $G$ by $w$:
\begin{align*}
G_w (s, \theta) : = G(s, \theta) + w (s, \theta)\nu (s, \theta).
\end{align*}
In order to formulate the problem conveniently, we discuss some general properties relating to the mean curvature of immersions. 

\subsection{Mean curvature of immersions}

The mean curvature of an immersion $\phi(s, \theta)$ is computed by a homogeneous degree $-1$ function $H = H (\underline{\nabla})$,  defined on the euclidean space $E = \mathbb R^{3 \times 2} \times\mathbb R^{3 \times 3 \times 3}$. We denote points of $E$ by $\underline{\nabla} : = (\nabla, \nabla^2)$, where
\begin{align*}
\nabla = (\nabla_s, \nabla_\theta), \quad \nabla^2  = (\nabla_{s  s}, \nabla_{\theta \theta}, \nabla_{s \theta}).
\end{align*}
We can then explicitly write
\begin{align} \label{mean_curvature_function}
H (\underline{\nabla}) = \left\{(\nabla^T \nabla) *( \nabla^2 \cdot \nabla_s \wedge \nabla_\theta) \right\} / \left(\det{\nabla^T \nabla}\right)^{3/2}.
\end{align}
$H (\underline{\nabla})$ is then  homogeneous degree $-1$ in the sense that 
\begin{align*}
H (c \underline{\nabla})  = c^{-1} H (\underline{\nabla})
\end{align*}
when defined. This immediately gives that  the $j^{th}$ derivative of $H$ is homogeneous degree $-1 - j$ in the sense that
\begin{align*}
\left. D^{(j)}H \right|_{ c \underline{\nabla}} = c^{-1 - j}   \left. D^{(j)}H \right|_{  \underline{\nabla}}.
\end{align*}
Let $R$ be a rotation of $R^3$, and let $R\underline{\nabla}$ be the point of $E$ obtained by rotating the column vectors of $\underline{\nabla}$ by $R$. Then $H$ is  invariant under rotations  in the sense that 
\begin{align*}
H (R \underline{\nabla}) = H(\underline{\nabla}).
\end{align*}
For a point $\underline{\nabla} $ in $E$, we set
\begin{align*}
\mathfrak{a} (\underline{\nabla})  = \mathfrak{a} (\nabla) : = \left( \frac{\nabla_s}{ |\nabla_s |} \cdot \frac{\nabla_\theta}{| \nabla_\theta|}, \frac{|\nabla_s|}{|\nabla_\theta |} - 1\right).
\end{align*}
Note that $\mathfrak{a} (\nabla)$ vanishes exactly when the columns of $\nabla$ are orthogonal and of the same length. Then,  as long as $\left|\mathfrak{a} (\underline{\nabla})\right|$ is sufficiently small (say, less than $1/4$) it holds that 
\begin{align}\label{DH_est}
\left|\left. D^{(j)}  H \right|_{\underline{\nabla}} \right| < C  |\nabla|^{ - 1- j} \left( 1 + |\nabla^2|/ |\nabla|\right) 
\end{align}
since 
\begin{align*}
\left. D^{(j)}  H \right|_{\underline{\nabla}} = |\nabla |^{- 1 - j}\left. D^{(j)}  H \right|_{\underline{\nabla}/ |\nabla|}
\end{align*}
and $\det{\nabla^T \nabla}$ in (\ref{mean_curvature_function}) is uniformly smooth on  compact subsets of $E \setminus \{ 0 \}$. Given an immersion $\phi$, set 
\begin{align*}
\underline{\nabla} \phi : = (\nabla \phi, \nabla^2 \phi),
\end{align*}
so that the mean curvature of $\phi$ is given by
\begin{align*}
H_{\phi} : = H (\underline{\nabla} \phi).
\end{align*}
We also note that from ($\ref{G_derivatives}$) 
\begin{align} \label{G_almost_conformal}
\left|\mathfrak{a} (\underline{\nabla} G) \right|< C \delta.
\end{align}

\subsection{Separating variables}
Let $R_\theta$ be the  rotation 
\begin{align*}
R_\theta = \ex^* \otimes \er + \ey^* \otimes \erp + \ez^* \otimes \ez
\end{align*}
where $\{\ex^*, \ey^*, \ez^*\}$ is the dual basis in $\mathbb R^3$ to the standard basis. Set $ \tilde{\nabla}_0: = e^{- \delta \theta} R_\theta^{-1}\underline{\nabla} G $. For a function  $u(s)$ we abuse notation slightly and set
\begin{align*}
\underline{\nabla}_\delta u(s) : = e^{- \delta \theta} R_\theta^{-1} \underline{\nabla}( e^{\delta \theta} u \nu) := (\nabla_{\delta} u, \nabla^2_\delta u).
\end{align*}
By (\ref{G_derivatives}) and \eqref{unit_normal}, both $\tilde{\nabla}_0$ and $\underline{\nabla}_\delta u$ are independent of $\theta$. We can then write
\begin{align} \label{w_mean_curvature}
H_w & = H (\underline{\nabla} (G + \nu w) ) =  e^{- \delta \theta} H ( e^{- \delta \theta}\underline{\nabla}(G +w\nu )) \\ \notag
& =  e^{- \delta \theta}H (\tilde{\nabla}_0 + \underline{\nabla}_\delta u) \\ \notag
& : = e^{- \delta \theta} \cosh^{-2} (s)Q_\delta (u).
\end{align}
Note that $Q_\delta(u)$ is a fixed second order differential operator without dependence on $\theta$ since $\tilde{\nabla}_0$ and $\underline{\nabla}_\delta u$ depend only on $s$. The linearization of the operator  $Q_\delta$ is then
\begin{align}
  L_\delta u &: = e^{\delta \theta} \cosh^{2} (s) \mathcal{L} e^{\delta  \theta} u \\ \notag
 &  = \left( 1 + \delta^2 \tanh^2(s)\right) u'' + \delta^2 u - 2 \delta^2\tanh(s) u'   +2 \delta^2 \tanh (s) \cosh^{-2} (s) u'  \\ \notag
  &\qquad - \delta^2 \cosh^{-2} (s)u + 2 \cosh^{-2}(s) u + 2\delta^2 \tanh^2 (s) \cosh^{-2}(s) u \\ \notag
  & := L_0 (u) + O_\delta (u),
\end{align}
where above we have set
\begin{align}
L_0 (u) : = u'' (s)+ 2  \cosh^{ - 2}(s) u(s).
\end{align}
For every $f \in C^{0,\alpha}(\mathbb R)$, let
\begin{align}
L_0^{-1} (f) (s): = \left(\int_0^s \tanh^{-2} (s') \int_0^{s'} \tanh(s'') f(s'') d s'' ds' \right) \tanh(s).
\end{align}
Then
\begin{lemma}
$L_0^{-1}$ is a right sided inverse for $L_0$.
\end{lemma}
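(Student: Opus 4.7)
The plan is to verify the identity $L_0(L_0^{-1}f) = f$ by direct differentiation. The structure of the formula for $L_0^{-1}$ is essentially variation of parameters built around the homogeneous solution $\tanh(s)$, so the first step is to observe that $\tanh(s)$ lies in the kernel of $L_0$. Indeed $(\tanh s)'' = -2\cosh^{-2}(s)\tanh(s)$, which cancels the zeroth-order term $2\cosh^{-2}(s)\tanh(s)$ exactly. This fact will be used implicitly when several terms collapse after differentiation.

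Next I would set up notation to control the nested integrals. Writing
\begin{align*}
H(s) := \int_0^s \tanh(s'') f(s'')\, ds'', \qquad G(s) := \int_0^s \tanh^{-2}(s') H(s')\, ds',
\end{align*}
we have $L_0^{-1}(f)(s) = \tanh(s)\, G(s)$. Applying the product rule and the fundamental theorem of calculus, compute
\begin{align*}
(L_0^{-1}f)'(s) = \cosh^{-2}(s)\, G(s) + \coth(s)\, H(s),
\end{align*}
since $\tanh(s)\cdot\tanh^{-2}(s) = \coth(s)$. Differentiating again, the derivative of $\coth(s)H(s)$ produces $-\sinh^{-2}(s)H(s) + \coth(s)\tanh(s)f(s)$, while the derivative of $\cosh^{-2}(s)G(s)$ produces $-2\cosh^{-2}(s)\tanh(s)G(s) + \cosh^{-2}(s)\tanh^{-2}(s)H(s)$.

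The crucial algebraic identity is $\cosh^{-2}(s)\tanh^{-2}(s) = \sinh^{-2}(s)$, which forces the two $H(s)$ terms to cancel, while $\coth(s)\tanh(s) = 1$ leaves behind exactly $f(s)$. Collecting everything,
\begin{align*}
(L_0^{-1}f)''(s) = -2\cosh^{-2}(s)\tanh(s)\, G(s) + f(s),
\end{align*}
and adding $2\cosh^{-2}(s)\cdot L_0^{-1}f(s) = 2\cosh^{-2}(s)\tanh(s)G(s)$ yields $L_0(L_0^{-1}f)(s) = f(s)$ as required. No step is really an obstacle; the only point requiring care is the simultaneous bookkeeping of the two integrals $G$ and $H$ and the verification that the hyperbolic identities line up so that the $G$-independent terms involving $H$ cancel rather than producing spurious lower-order contributions. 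The resulting argument is a short direct calculation rather than an appeal to general ODE theory.
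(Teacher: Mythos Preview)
Your verification is correct: the product-rule computation is accurate, the hyperbolic identities $\cosh^{-2}\tanh^{-2}=\sinh^{-2}$ and $\coth\cdot\tanh=1$ do exactly what you say, and the $H$-terms cancel to leave $(L_0^{-1}f)''=-2\cosh^{-2}(s)\tanh(s)G(s)+f(s)$, which combines with the zeroth-order term to give $L_0(L_0^{-1}f)=f$.

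The paper takes a different, derivation-oriented route. Rather than verifying the given formula, it explains how one arrives at it: one makes the reduction-of-order ansatz $L_0(\tanh(s)\,u)=f$, and since $L_0(\tanh)=0$ the cross terms collapse to the first-order equation $\tanh(s)\,u''+2\tanh'(s)\,u'=f$, i.e.\ $(\tanh^2(s)\,u')'=\tanh(s)f$, which integrates twice to the stated expression for $L_0^{-1}$. Your approach has the virtue of being a self-contained check that the formula on the page actually works, with no appeal to ODE structure; the paper's approach has the virtue of showing where the formula comes from and why the particular nesting of $\tanh$ and $\tanh^{-2}$ weights appears. Both arguments are short and both hinge on the same fact you identified at the outset, that $\tanh$ lies in the kernel of $L_0$.
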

\begin{proof}
The expression can be found by trying the ansatz $L_0 (\tanh(s) u) = f$. Since $L_0 ( \tanh(s)) = 0$, this gives the reduced order equation 
\begin{align*}
\tanh (s) u'' + 2 \tanh' (s) u ' = f.
\end{align*}
Integrating then gives the expression for $L_0^{-1}$.
\end{proof}

\subsection{Finding an exact solution}
We construct the solution $u(s)$ to $Q_\delta(u) = 0$ as  a fixed point for the map
\begin{align}
\Psi (u) : = u - L_0^{-1} Q_\delta (u).
\end{align}
As usual we do this by appealing to a fixed point theorem. For this, we need a few estimates and appropriate norms.
\begin{definition}
For $u:\Omega \to \mathbb R$ we define its local H\"older function by
\[
\|u\|_{k,\alpha}(s):= {\|u:C^{k,\alpha}(B_1(s) \cap \Omega)\|}.
\]
\end{definition}

\begin{lemma} \label{delta_mc_perturbation_estimate}
Assume that 
\begin{align} \label{u_generous_smallness_assumption}
\| u \|_{ {2,\alpha}} (s) < \varepsilon \cosh (s).
\end{align}
Then for $\varepsilon$ sufficiently small there exists $C_1$ independent of $\varepsilon$ such that
\begin{align*}
\|Q_\delta (u) - Q_0 (u) \|_{{0,\alpha}} (s)   < C_1 \delta \| u\|_{{2,\alpha}} (s). 
\end{align*}
\end{lemma}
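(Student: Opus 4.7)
The plan is to apply the fundamental theorem of calculus in the parameter $\delta$. Setting $A(\delta') := \tilde\nabla_0(\delta') + \underline\nabla_{\delta'} u$, so that $Q_{\delta'}(u) = \cosh^2(s)\, H(A(\delta'))$, one has
\[
Q_\delta(u) - Q_0(u) = \cosh^2(s)\int_0^\delta DH\big|_{A(\delta')} \cdot \partial_{\delta'}A(\delta')\, d\delta',
\]
and the task reduces to estimating the integrand by $C\cosh^{-2}(s)\|u\|_{2,\alpha}(s)$ pointwise and in $C^{0,\alpha}$.

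To control $DH|_{A(\delta')}$ via (\ref{DH_est}), I verify that $|\mathfrak{a}(A(\delta'))|<1/4$ uniformly along the path. From (\ref{G_derivatives}), the $\nabla_s$-component of $\tilde\nabla_0(\delta')$ is $\cosh(s)\ex$ independent of $\delta'$, so $|\tilde\nabla_0(\delta')|\gtrsim\cosh(s)$ with $|\tilde\nabla_0^{(2)}(\delta')|/|\tilde\nabla_0(\delta')|\lesssim 1$, and (\ref{G_almost_conformal}) gives $|\mathfrak{a}(\tilde\nabla_0(\delta'))|\lesssim\delta'$. The hypothesis (\ref{u_generous_smallness_assumption}) combined with the explicit $\nu$ from (\ref{normal}) and the boundedness of $\nu_s,\nu_\theta$ gives $|\underline\nabla_{\delta'}u|\leq C\|u\|_{2,\alpha}(s)<C\varepsilon\cosh(s)$. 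For $\varepsilon,\delta$ small the triangle inequality gives $|\mathfrak{a}(A(\delta'))|<1/4$, so (\ref{DH_est}) yields $|DH|_{A(\delta')}|\leq C\cosh^{-2}(s)$.

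To bound $\partial_{\delta'}A(\delta')$ in a way that produces the factor $\|u\|_{2,\alpha}(s)$, direct differentiation of $\underline\nabla_{\delta'}u = e^{-\delta'\theta}R_\theta^{-1}\underline\nabla(e^{\delta'\theta}u\nu)$ shows (after cancellation of $e^{\pm\delta'\theta}$ and application of $R_\theta^{-1}$ to the explicit $\nu$) that every term is a bounded function of $s$ times one of $u,u',u''$ with polynomial coefficients in $\delta'$, so $|\partial_{\delta'}\underline\nabla_{\delta'}u|\leq C\|u\|_{2,\alpha}(s)$. The companion piece $\partial_{\delta'}\tilde\nabla_0(\delta')$, of size $\lesssim\cosh(s)$ and carrying no $u$-factor, is handled by further expanding
\[
DH|_{A(\delta')} = DH|_{\tilde\nabla_0(\delta')} + \int_0^1 D^2H\big|_{\tilde\nabla_0(\delta')+t\underline\nabla_{\delta'}u}\cdot\underline\nabla_{\delta'}u\,dt\,;
\]
the first summand, paired with $\partial_{\delta'}\tilde\nabla_0(\delta')$ and integrated in $\delta'$, reconstructs the $u$-independent base variation $H(\tilde\nabla_0(\delta)) - H(\tilde\nabla_0(0))$, absorbed into $Q_0(u)$ under the interpretation $Q_0(u)=Q_{\delta'}(u)|_{\delta'=0}$, while the remainder carries a factor of $\underline\nabla_{\delta'}u$ and is bounded via (\ref{DH_est}) at $j=2$ by $|D^2H|\cdot|\underline\nabla_{\delta'}u|\cdot|\partial_{\delta'}\tilde\nabla_0|\lesssim\cosh^{-3}(s)\cdot\|u\|_{2,\alpha}(s)\cdot\cosh(s)=\cosh^{-2}(s)\|u\|_{2,\alpha}(s)$.

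Integrating in $\delta'$ and multiplying by the $\cosh^2(s)$ prefactor gives the desired pointwise bound; the $C^{0,\alpha}$ upgrade follows by applying the same argument to Hölder difference quotients on unit $s$-balls, using that composition with the smooth function $H$ and multiplication by $\cosh$-factors preserve local Hölder regularity. The main obstacle is the uniform verification of $|\mathfrak{a}(A(\delta'))|<1/4$ along the entire path; the smallness condition (\ref{u_generous_smallness_assumption}) in the form $\|u\|_{2,\alpha}(s)<\varepsilon\cosh(s)$ is calibrated precisely so that the perturbation $\underline\nabla_{\delta'}u$ stays subordinate to the base $\tilde\nabla_0(\delta')$ throughout $[0,\delta]$, which is exactly what is needed for (\ref{DH_est}) to apply uniformly.
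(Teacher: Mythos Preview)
Your approach differs from the paper's and contains a genuine gap at the ``absorption'' step. The paper does \emph{not} interpolate in the parameter $\delta$: it keeps $\tilde\nabla_0$ fixed (at its $\delta$-dependent value) and writes
\[
Q_\delta(u)-Q_0(u)=\cosh^2(s)\int_0^1 DH\big|_{\,\tilde\nabla_0+\sigma\underline\nabla_\delta u+(1-\sigma)\underline\nabla_0 u}\bigl(\underline\nabla_\delta u-\underline\nabla_0 u\bigr)\,d\sigma.
\]
The only endpoint difference is $\underline\nabla_\delta u-\underline\nabla_0 u$, every entry of which is a bounded function of $s$ times a factor of $\delta$ times one of $u,u',u''$; hence $\|\underline\nabla_\delta u-\underline\nabla_0 u\|_{0,\alpha}(s)\le C\delta\|u\|_{2,\alpha}(s)$, and the lemma follows in one line from (\ref{DH_est}).

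By letting $\tilde\nabla_0(\delta')$ vary as well, you are forced to confront $\partial_{\delta'}\tilde\nabla_0(\delta')$, a term of size $\cosh(s)$ carrying no factor of $u$. You correctly flag this as the obstacle, but your proposed fix does not work. The piece you isolate, $\cosh^2(s)\bigl(H(\tilde\nabla_0(\delta))-H(\tilde\nabla_0(0))\bigr)$, is exactly $Q_\delta(0)-Q_0(0)$; under your reading $Q_0=Q_{\delta'}|_{\delta'=0}$ this equals $-\delta\tanh(s)$, which is nonzero and cannot be ``absorbed into $Q_0(u)$'' since $Q_0(u)$ has already been subtracted on the left. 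In fact, under that reading the stated inequality is simply false at $u\equiv 0$: the left side is $|\delta\tanh(s)|$ while the right side vanishes. The paper's $Q_0(u)$ means $\cosh^2(s)H(\tilde\nabla_0+\underline\nabla_0 u)$ with the \emph{same} $\tilde\nabla_0$, so that $Q_\delta(0)=Q_0(0)$; once you adopt this, $\tilde\nabla_0$ is constant along your path, $\partial_{\delta'}\tilde\nabla_0\equiv 0$, and your argument collapses to (a reparametrization of) the paper's.
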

\begin{proof}
Write
\begin{align*}
Q_\delta (u) - Q_0 (u) & = \cosh^{2} (s) H (\tilde{\nabla}_0 + \underline{\nabla}_\delta u) - \cosh^{2} (s) H (\tilde{\nabla}_0 + \underline{\nabla}_0 u) \\ \notag
& = \cosh^2 (s) \int_0^1 \left. D H \right|_{\underline{\nabla} (\sigma)} \left( \underline{\nabla}_\delta u - \underline{\nabla}_0 u \right) d \sigma
\end{align*}
where we have set $\underline{\nabla} (\sigma) : = \tilde{\nabla}_0 + \sigma  \underline{\nabla}_\delta u + (1 - \sigma )  \underline{\nabla}_0 u $. By (\ref{G_almost_conformal}) and (\ref{u_generous_smallness_assumption}), there exists a large $C$ independent of $\delta, \varepsilon$ such that
\begin{align} \label{good_nabla_bounds}
C^{-1} \cosh (s) < |\nabla (\sigma)| < C \cosh(s);\quad \quad | \nabla^2 (\sigma)| < C \cosh(s);  \quad \quad \left| \mathfrak{a} (\nabla (\sigma)) \right| < C\left( \varepsilon  + \delta \right).
\end{align}
Since $D H$ is homogeneous degree $-2$, \eqref{DH_est} implies
\begin{align*}
\left\| Q_\delta (u) - Q_0 (u) \right\|_{{0,\alpha}}(s)   & <  C \| \underline{\nabla}_\delta u  - \underline{\nabla}_0u\|_{{0,\alpha}}(s) \sup_{\sigma}\left\{ \frac{| \underline{\nabla} (\sigma)|}{ \cosh}(s) \right\}\\ \notag
& < C\delta \| u \|_{{2,\alpha}}(s) .
\end{align*}The final inequality follows from the uniform bounds on the derivatives of $\nu$ and the definition of $\underline \nabla_\delta u, \underline \nabla_0 u$.
\end{proof}

\begin{lemma} \label{quadratic_remainder_estimate}
Assume that $u(s), v(s)$ satisfy (\ref{u_generous_smallness_assumption}). Then there exists $C_2$ independent of $\varepsilon$ such that
\begin{align*}
\left\| Q_\delta (v) - Q_\delta (u) - L_\delta (v - u) \right\|_{{0,\alpha}}(s)  < C_2 \cosh^{-1} (s) \| v - u \|^2_{{2,\alpha}}(s). 
\end{align*}
\end{lemma}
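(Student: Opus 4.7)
The plan is to identify the left-hand side as a second-order Taylor remainder for the smooth operator $Q_\delta$. With $w_t := u + t(v-u)$ and $L_\delta$ interpreted as $DQ_\delta|_u$ (the quadratic-in-$\|v-u\|$ bound forces this choice of basepoint; linearization at $0$ would necessarily generate a cross-term $(\|u\|+\|v\|)\|v-u\|$), the fundamental theorem of calculus gives
\[
Q_\delta(v) - Q_\delta(u) - L_\delta(v-u) \;=\; \int_0^1 (1-t)\, D^2 Q_\delta\big|_{w_t}\bigl(v-u,\,v-u\bigr)\, dt,
\]
so the task reduces to a uniform-in-$t$ pointwise bound on $D^2 Q_\delta|_{w_t}$.

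Since $Q_\delta(w) = \cosh^2(s)\, H(\tilde{\nabla}_0 + \underline{\nabla}_\delta w)$ and $w \mapsto \underline{\nabla}_\delta w$ is linear, the chain rule gives
\[
D^2 Q_\delta\big|_{w_t}(\xi,\xi) \;=\; \cosh^2(s)\, D^2 H\big|_{\tilde{\nabla}_0 + \underline{\nabla}_\delta w_t}\bigl(\underline{\nabla}_\delta \xi,\,\underline{\nabla}_\delta \xi\bigr).
\]
By convexity each $w_t$ still satisfies (\ref{u_generous_smallness_assumption}), so the argument leading to (\ref{good_nabla_bounds}) applies verbatim and places $\tilde{\nabla}_0 + \underline{\nabla}_\delta w_t$ in the regime $|\nabla| \asymp \cosh(s)$, $|\nabla^2| \lesssim \cosh(s)$, $|\mathfrak{a}| < 1/4$. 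Invoking (\ref{DH_est}) at $j = 2$, which records the degree $-3$ homogeneity of $D^2 H$, then yields $\|D^2 H|_{\tilde{\nabla}_0 + \underline{\nabla}_\delta w_t}\| \leq C \cosh^{-3}(s)$.

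To close, I would observe that because $\nu$ and its first two derivatives are uniformly bounded on $\mathbb{R}^2$, the linear map $\xi \mapsto \underline{\nabla}_\delta \xi$ carries $C^{2,\alpha}(B_1(s))$ boundedly into $C^{0,\alpha}(B_1(s))$ with a constant independent of $s$ and of $\delta$ (for $\delta$ small). Assembling the three estimates,
\[
\|Q_\delta(v) - Q_\delta(u) - L_\delta(v-u)\|_{0,\alpha}(s) \;\leq\; C\, \cosh^2(s) \cdot \cosh^{-3}(s) \cdot \|v-u\|^2_{2,\alpha}(s) \;=\; C\, \cosh^{-1}(s)\, \|v-u\|^2_{2,\alpha}(s),
\]
which is the desired inequality.

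The chief technical obstacle I anticipate is the H\"older piece of the $\|\cdot\|_{0,\alpha}$-norm on the left, which requires a matching pointwise bound on $D^3 H$ (homogeneous of degree $-4$) via (\ref{DH_est}) at $j = 3$, applied to finite differences between two points of $B_1(s)$. This is routine once the $C^0$ version above is organized, since all $\cosh$ powers track through the chain rule identically and the uniform non-degeneracy (\ref{good_nabla_bounds}) of the frame at $w_t$ is preserved throughout the interpolation segment by (\ref{G_almost_conformal}) and (\ref{u_generous_smallness_assumption}).
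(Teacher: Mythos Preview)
Your proof is correct and follows essentially the same route as the paper: express the left side as an integral second-order Taylor remainder for $H$ along the segment from $u$ to $v$, then combine (\ref{good_nabla_bounds}) with the degree $-3$ homogeneity bound (\ref{DH_est}) at $j=2$ to extract the factor $\cosh^{-1}(s)$. The paper's version writes the remainder as the double integral $\cosh^{2}(s)\int_0^1\int_0^\sigma D^{(2)}H|_{\underline{\nabla}(\sigma')}\bigl(\underline{\nabla}_0(v-u),\underline{\nabla}_0(v-u)\bigr)\,d\sigma'\,d\sigma$ and estimates identically; your remark about the basepoint of $L_\delta$ is a legitimate subtlety that the paper's proof tacitly suppresses.
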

\begin{proof}
We can write
\begin{align*}
 Q_\delta (v) - Q_\delta (u) - L_\delta (v - u)  = \cosh^{2} (s) \int_0^{1}\int_0^\sigma \left. D^{(2)} H \right|_{\underline{\nabla} (\sigma')} (\underline{\nabla}_0 (v - u), \underline{\nabla}_0 (v - u))d\sigma' \,d\sigma
\end{align*}
where we have set $\underline{\nabla} (\sigma') = \tilde{\nabla}_0 + \sigma' \underline{\nabla}_0 v + (1 - \sigma')  \underline{\nabla}_0 u$. Again, the assumptions give that $\underline{\nabla} (\sigma')$ satisfies the estimates in (\ref{good_nabla_bounds}), so that we get the estimate
\begin{align*}
 \left\|Q_\delta (v) - Q_\delta (u) - L_\delta (v - u) \right\|_{{0,\alpha} }(s) & < C \| u - v \|^2 _{{2,\alpha}}(s) \cosh^{-1}(s) \sup_{\sigma} \left\{ \frac{|\underline{\nabla} (\sigma')|}{ \cosh}(s)\right\}.
\end{align*}
\end{proof}

The function spaces in which we find the exact solution are the spaces $\mathcal{X}_k$ of functions on $[-\varepsilon/\delta^{1/4},  \varepsilon/\delta^{1/4}]$ equipped with the norm
\begin{align*}
\| f\|_{\mathcal{X}, k} :  =  \sup_{|s| \leq \varepsilon/\delta^{1/4}}\frac{\| f : C^{k, \alpha}(B_1(s) \cap [-\varepsilon/\delta^{1/4},  \varepsilon/\delta^{1/4}]) \|}{|s|^k}.
\end{align*}
We note that 
\begin{lemma} \label{linear_operator_inversion}
$L_0^{-1} : \mathcal{X}_0 \rightarrow \mathcal{X}_2$  is a bounded linear map with bound $C_3>0$.
\end{lemma}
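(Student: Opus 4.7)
The plan is to work directly with the explicit integral formula for $L_0^{-1}$ given above and establish pointwise bounds for $L_0^{-1}f$ and its first two derivatives in terms of $M := \|f\|_{\mathcal{X},0}$, then assemble them into a $C^{2,\alpha}$ estimate. Since the operator $L_0$ itself carries no $\delta$-dependence, the constant $C_3$ obtained will automatically be independent of $\delta$; the role of the interval $[-\varepsilon/\delta^{1/4},\varepsilon/\delta^{1/4}]$ is only to ensure all integrals are taken over a bounded set.

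Writing $L_0^{-1}f = F(s)\tanh(s)$ with $F(s) := \int_0^s \tanh^{-2}(s') \int_0^{s'}\tanh(s'')f(s'')\,ds''\,ds'$, I would first estimate the inner integral by $M|\log\cosh(s')|$, which behaves like $(s')^2/2$ near $0$ and like $|s'|$ for large $|s'|$. Dividing by $\tanh^2(s')$ then yields an integrand bounded uniformly by $CM(1+|s'|)$, so that $|F(s)|\leq CM(|s|+s^2)$, and multiplying by $\tanh(s)$—which is $O(|s|)$ near the origin and $O(1)$ at infinity—gives the key pointwise estimate $|L_0^{-1}f(s)|\leq CMs^2$, in particular the quadratic vanishing of $L_0^{-1}f$ at the origin. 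A parallel computation for $(L_0^{-1}f)' = F'(s)\tanh(s) + F(s)\cosh^{-2}(s)$, using $|F'(s)| \leq CM(1+|s|)$, produces $|(L_0^{-1}f)'(s)|\leq CM|s|$.

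For the second order estimate I would avoid differentiating $F$ again by invoking the defining identity
\[
(L_0^{-1}f)''(s) = f(s) - 2\cosh^{-2}(s)\,L_0^{-1}f(s).
\]
This immediately gives $|(L_0^{-1}f)''(s)| \leq M + 2CMs^2\cosh^{-2}(s) \leq C'M$ uniformly, and the H\"older seminorm of the right hand side on any $B_1(s)$ inside the interval is controlled by $\|f\|_{C^{0,\alpha}(B_1(s))}$ together with a constant multiple of the sup norm of $L_0^{-1}f$ on $B_1(s)$, both of which are already bounded by $CM$. Combining the three pointwise bounds yields $\|L_0^{-1}f\|_{C^{2,\alpha}(B_1(s))} \leq CM(1+|s|)^2$, and after normalization by the $|s|^2$ weight built into $\|\cdot\|_{\mathcal{X},2}$ one obtains the desired $\|L_0^{-1}f\|_{\mathcal{X},2}\leq C_3\|f\|_{\mathcal{X},0}$.

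The main obstacle is the bookkeeping near $s=0$: verifying the quadratic vanishing of $L_0^{-1}f$ and its linear vanishing for $(L_0^{-1}f)'$ requires carefully pairing the $\tanh^{-2}(s')$ singularity with the $O((s')^2)$ growth of the inner integral, so that no spurious blow-up appears when one normalizes by $|s|^2$. Once this balancing at the origin is in place, the large-$|s|$ bounds follow from elementary integration, and the H\"older control follows from the PDE identity.
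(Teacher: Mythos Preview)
Your approach is essentially the paper's: both bound $|L_0^{-1}f|$ directly from the integral formula to obtain the pointwise estimate $CMs^2$, and the paper then simply asserts that the derivative bounds ``follow similarly'' without further detail. Your use of the identity $(L_0^{-1}f)'' = f - 2\cosh^{-2}(s)\,L_0^{-1}f$ for the second-derivative and H\"older control is a cleaner shortcut than differentiating the double integral again, but it is a refinement rather than a different route.
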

\begin{proof}
Let $f \in \mathcal{X}_0$ and write $A : = \| f\|_{\mathcal{X}, 0}$. Then
\begin{align*}
\left| L_0^{-1} (f) \right| (s) &  < A\left(\int_0^s \tanh^{-2} (s') \int_0^{s'} \tanh(s'') d s'' ds' \right) \tanh(s) \\ \notag
& < C_3 A s^2.
\end{align*}
The estimate for the derivatives of $L_0^{-1} (f)$ follow similarly, so that we have $\| L_0^{-1} (f)\|_{\mathcal{X},2} < C_3\| f\|_{\mathcal{X}, 0}$ as claimed. 
\end{proof}
We also note the important fact that
\begin{lemma}
On the space of functions $\mathcal X_2$, $L_0^{-1}$ is a left inverse of $L_0$.
\end{lemma}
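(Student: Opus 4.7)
The plan is to take an arbitrary $u \in \mathcal{X}_2$, form $v := L_0^{-1}(L_0 u) - u$, and show $v \equiv 0$. The argument factors through three ingredients: the right-inverse property of $L_0^{-1}$ already established, an explicit computation of $\ker L_0$, and the fact that membership in $\mathcal{X}_2$ forces quadratic vanishing at the origin.

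First I would check that $L_0$ sends $\mathcal{X}_2$ boundedly into $\mathcal{X}_0$: since $L_0 u = u'' + 2\cosh^{-2}(s) u$ and $\cosh^{-2}$ is uniformly bounded, this is immediate from the definition of the norms. Combined with Lemma \ref{linear_operator_inversion}, $L_0^{-1}(L_0 u)$ and hence $v$ lies in $\mathcal{X}_2$, so in particular $|v(s)| \leq C|s|^2$ near $s = 0$. Applying the right-inverse identity established earlier then gives $L_0 v = L_0 L_0^{-1}(L_0 u) - L_0 u = 0$, so $v$ lies in $\ker L_0$.

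Next I would describe $\ker L_0$ explicitly. We already know $\tanh s$ is one solution; the same reduction-of-order ansatz $L_0(\tanh(s)\phi) = 0$ used to derive the formula for $L_0^{-1}$ produces the second linearly independent solution $u_2(s) = \tanh(s)(s - \coth s) = s\tanh s - 1$, using $\int \tanh^{-2}(s)\,ds = s - \coth s$. Every kernel element therefore has the form $c_1 \tanh s + c_2(s\tanh s - 1)$, whose Taylor expansion at the origin begins $-c_2 + c_1 s + O(s^2)$. The pointwise bound $|v(s)| \leq C|s|^2$ forces $c_2 = 0$ from the constant term and then $c_1 = 0$ from the linear term, so $v \equiv 0$ and $L_0^{-1}(L_0 u) = u$ as required.

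The only mildly subtle step is identifying the second kernel solution: the antiderivative $s - \coth s$ is singular at the origin, but after multiplication by $\tanh s$ the product $s\tanh s - 1$ is smooth, and its nonzero value $-1$ at $s = 0$ is precisely the obstruction that lets the weighted norm $\|\cdot\|_{\mathcal{X}, 2}$ rule it out. Beyond that, all remaining work is bookkeeping against the definitions.
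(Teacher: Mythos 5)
Your proof is correct and follows essentially the same route as the paper: form the difference $v = L_0^{-1}(L_0 u) - u$, use the right-inverse property to place $v$ in $\ker L_0$, and then use the quadratic vanishing at the origin built into $\mathcal{X}_2$ to conclude $v \equiv 0$. The only difference is presentational — the paper invokes uniqueness for the ODE initial value problem (noting $w(0) = w'(0) = 0$ directly from the formula for $L_0^{-1}$), whereas you carry out the reduction of order to exhibit the second kernel solution $s\tanh s - 1$ explicitly and rule out both coefficients by Taylor expansion; both are valid and your explicit kernel computation is a nice sanity check.
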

\begin{proof}
Let $u \in \mathcal X_2$ and $L_0(u)=f \in \mathcal X_0$, and suppose $L_0^{-1}(f)=w$. By the definition of $L_0^{-1}$, $L_0(w)=f$ and $w(0)=w'(0)=0$. By standard ODE theory, since $L_0(u-w)=0$ and $(u-w)(0)=(u-w)'(0)=0$, we conclude $u=w$.\end{proof}
We set
\begin{align} \label{schauder_ball}
\mathcal{B} : = \left\{ f \in \mathcal{X}_2: \| f\|_{\mathcal{X}, 2} \leq  \zeta \delta \right\}.
\end{align}

The main result is then
\begin{theorem} \label{main_theorem}
For $\zeta$ sufficiently large in \eqref{schauder_ball} there exists $\delta_0$ such that for all $0<\delta< \delta_0$, there exists $u \in \mathcal{B}$ such that $Q_\delta(u) (s) = 0$. 

\end{theorem}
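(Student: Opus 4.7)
The plan is to apply the Banach contraction mapping theorem to $\Psi$ on the closed ball $\mathcal{B}\subset \mathcal{X}_2$. A fixed point $u$ of $\Psi$ satisfies $L_0^{-1}Q_\delta(u)=0$, and applying $L_0$ (using $L_0L_0^{-1}=\mathrm{id}$) yields $Q_\delta(u)=0$. So the task reduces to verifying that $\Psi$ is a self-map of $\mathcal{B}$ and a strict contraction there.

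For the source bound, note that $u\equiv 0$ corresponds to $G$ itself, and Corollary \ref{mean_curvature} gives $Q_\delta(0) = e^{\delta\theta}\cosh^2(s)H_G = -\delta\tanh(s)$, so $\|Q_\delta(0)\|_{\mathcal X,0} = O(\delta)$ and Lemma \ref{linear_operator_inversion} yields $\|\Psi(0)\|_{\mathcal X,2} \le C_3 C_0\,\delta$ for some absolute $C_0$. For the contraction estimate on $u,v\in\mathcal B$, one uses $L_\delta = L_0 + O_\delta$ together with the fact that $L_0^{-1}$ is a left inverse of $L_0$ on $\mathcal X_2$ to rewrite
\begin{align*}
\Psi(v)-\Psi(u) = -L_0^{-1}\bigl[O_\delta(v-u) + E(v,u)\bigr], \qquad E(v,u) := Q_\delta(v)-Q_\delta(u)-L_\delta(v-u).
\end{align*}
Lemma \ref{quadratic_remainder_estimate} bounds $E(v,u)$ quadratically; splitting one factor of $\|v-u\|_{2,\alpha}(s)$ as $\|v-u\|_{\mathcal X,2}|s|^2$ and absorbing the other into $2\zeta\delta|s|^2$ via $u,v\in\mathcal B$, and using $\cosh^{-1}(s)|s|^4\le\varepsilon^4/\delta$ on the interval $|s|\le\varepsilon\delta^{-1/4}$, gives $\|E(v,u)\|_{\mathcal X,0}\le 2C_2\zeta\varepsilon^4\|v-u\|_{\mathcal X,2}$. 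Since every term in $O_\delta$ carries a $\delta^2$ prefactor, one also obtains $\|O_\delta(v-u)\|_{\mathcal X,0}\le C\varepsilon^2\delta^{3/2}\|v-u\|_{\mathcal X,2}$. Applying Lemma \ref{linear_operator_inversion} produces a Lipschitz constant $\kappa = C_3\bigl(2C_2\zeta\varepsilon^4+C\varepsilon^2\delta^{3/2}\bigr)$.

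Choose parameters in order: first $\zeta\ge 2C_3 C_0$ so that $\|\Psi(0)\|_{\mathcal X,2}\le\zeta\delta/2$; then $\varepsilon$ small enough that $2C_2C_3\zeta\varepsilon^4\le 1/4$; finally $\delta_0$ small enough that $CC_3\varepsilon^2\delta^{3/2}\le 1/4$ for $\delta<\delta_0$. This yields $\kappa\le 1/2$, so for $u\in\mathcal B$,
\begin{align*}
\|\Psi(u)\|_{\mathcal X,2}\le\|\Psi(0)\|_{\mathcal X,2}+\kappa\|u\|_{\mathcal X,2}\le \zeta\delta/2+\zeta\delta/2 = \zeta\delta,
\end{align*}
so $\Psi:\mathcal B\to\mathcal B$ and Banach's theorem produces the fixed point. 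The main obstacle, and the reason for choosing the interval length $\varepsilon\delta^{-1/4}$, is the scale balancing in the quadratic error from Lemma \ref{quadratic_remainder_estimate}: the contribution $\cosh^{-1}(s)|s|^4$ can be as large as $\delta^{-1}$ on the interval, and combined with $\|u\|^2_{\mathcal X,2}\lesssim\zeta^2\delta^2|s|^4$ this leaves a residue of order $\delta$ matching the source $Q_\delta(0)=-\delta\tanh(s)$. A longer interval would let the quadratic self-interaction dominate the linear source and destroy the contraction; a shorter one would not exploit the full region where a minimal graph exists.
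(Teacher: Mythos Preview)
Your argument is correct and, in one respect, slightly cleaner than the paper's. The paper decomposes $\Psi(v)-\Psi(u)$ as $I+II+III$ by inserting $Q_0$ rather than $L_\delta$, so that $II$ and $III$ are handled by Lemma~\ref{delta_mc_perturbation_estimate} and $I$ by Lemma~\ref{quadratic_remainder_estimate} at $\delta=0$; the resulting bounds on $I,II,III$ are absolute (not Lipschitz in $v-u$), so the paper only obtains $\Psi(\mathcal{B})\subset\mathcal{B}$ and then invokes the Schauder fixed point theorem. Your decomposition via $O_\delta(v-u)+E(v,u)$ instead exploits Lemma~\ref{quadratic_remainder_estimate} at the given $\delta$ and the explicit $\delta^2$ prefactor in $O_\delta$, bypassing Lemma~\ref{delta_mc_perturbation_estimate} entirely and yielding a genuine Lipschitz constant $\kappa\le 1/2$; this lets you close with the Banach contraction principle. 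The advantage of your route is that it avoids the compactness issue implicit in applying Schauder to a closed ball in $C^{2,\alpha}$ (which is not compact in its own topology), and it gives uniqueness of the fixed point for free. One small point you leave implicit: before invoking Lemma~\ref{quadratic_remainder_estimate} you should note that $u,v\in\mathcal{B}$ satisfy the hypothesis~\eqref{u_generous_smallness_assumption}, which follows since $\|u\|_{2,\alpha}(s)\le \zeta\delta|s|^2\le \zeta\varepsilon^2\delta^{1/2}<\varepsilon\le \varepsilon\cosh(s)$ once $\delta$ is small.
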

\begin{proof}Set $C_4:= \max\{C_1, C_2, C_3\}$ where the constants come from Lemmas \ref{delta_mc_perturbation_estimate}, \ref{quadratic_remainder_estimate}, and  \ref{linear_operator_inversion}. Choose $\zeta>1$ such that $C_4/\zeta <1/4$. Choose $0<\varepsilon \leq \min\{1, \frac{1}{4C_4(1+\zeta)}\}$. For this $\zeta, \varepsilon$, choose $0<\delta_0< \min\{1, \frac{\varepsilon}{4\zeta}\}$.

Set 
\begin{align*}
\Psi (u)  =u -L_0^{-1}Q_\delta (u).
\end{align*}
Then for $u,v \in \mathcal B$
\begin{align*}
\Psi (v) - \Psi (u)&  = v - u - L_0^{-1} \left( Q_\delta (v) - Q_\delta(u) \right)  = L_0^{-1} \left( L_0 (v - u) - Q_\delta (v) + Q_\delta (u) \right) \\ \notag 
& =  L_0^{-1} \left( L_0 (v - u) - Q_0 (v) + Q_0 (u) \right)  \\ \notag
& \quad -  L_0^{-1} \left(Q _\delta(v)- Q_0 (v) \right) +  L_0^{-1} \left(Q _\delta(u)- Q_0 (u) \right) \\ \notag
& = I + II + III.
\end{align*}
Given the bound on $\delta$, one can apply Lemma \ref{delta_mc_perturbation_estimate}. Taken with Lemma \ref{linear_operator_inversion}, the estimates imply
\begin{align}
\left\| II \right\|_{\mathcal{X}, 2} \leq C_4\|  Q _\delta(v)- Q_0 (v) \|_{\mathcal{X},0} < C_4 \delta\|v\|_{{2,\alpha}}(s)\leq C_4\delta^2 \zeta s^2 \leq C_4\delta^{3/2}\zeta \varepsilon^2 < C_4\varepsilon \delta \zeta.
\end{align}
The same argument gives the estimate
\begin{align}
\left\| III \right\|_{\mathcal{X}, 2} < C_4 \varepsilon \zeta \delta.
\end{align}
To obtain the estimate for $I$, we use Lemma \ref{quadratic_remainder_estimate} to get
\begin{align*}
  \left\| I \right\|_{\mathcal{X}, 2}  <  C_4\left\| L_0 (v - u) - Q_0 (v) - Q_0 (u) \right\|_{\mathcal{X},0} <C_4  \sup_{s}\left(  \cosh^{-1} (s) \delta^2 \zeta^2  s^4 \right) < C_4 \varepsilon \delta \zeta^2.
 \end{align*}
Together the estimates imply
 \begin{align*}
 \left\| \Psi (v) - \Psi (u) \right\|_{\mathcal{X}, 2} < C_4\left(\varepsilon  + \varepsilon \zeta \right)\delta \zeta < \frac{\delta \zeta }{4}. 
 \end{align*}
Additionally,  since $Q_\delta (0) = \delta \tanh(s)$, we have from Lemma \ref{linear_operator_inversion} that 
 \begin{align*}
 \| \Psi (0)\|_{\mathcal{X}, 2} < C_4 \delta.
 \end{align*}
Therefore, the condition on $\zeta$ implies 
 \begin{align*}
 \| \Psi (v)\|_{\mathcal{X}, 2} < \frac{\delta \zeta}{4} + C_4 \delta  = \left( 1/4 + C_4/ \zeta\right) \delta \zeta < \delta \zeta .
 \end{align*}
Thus, $\Psi (\mathcal{B}) \subset \mathcal{B}$. Since $\mathcal{B}$ is a compact, convex subset of a Banach space, the Schauder fixed point theorem gives the existence of a function $u(s) \in \mathcal{B}$ such that $u = u - L_0^{-1}Q_\delta(u)$. Taking $L_0$ of both sides we determine $Q_\delta( u) \equiv 0$ as claimed. 
\end{proof}

\subsection{Embeddedness and minimality of the graph }
Let $w_\delta (s, \theta)$ be the function given by
\begin{align}
w_\delta (s, \theta) : = 
e^{\delta \theta} u(s).
\end{align}
By (\ref{w_mean_curvature}), the normal graph $G_{w_\delta}$ over $G$ by $w_{\delta}$ is an immersed minimal surface.  Since $| w_\delta (s, \theta)| <  e^{\delta \theta}\zeta \delta s^2 < e^{\delta \theta}  \varepsilon \delta^{1/2}/4$, the embeddedness of $G_{w_\delta}$ follows from the definition of the map $G$ in (\ref{delta_map}).

\bibliographystyle{amsplain}
\bibliography{Biblio}
\end{document}